\DeclareMathAlphabet\mathbfcal{OMS}{cmsy}{b}{n}
\theoremstyle{plain}
\newtheorem{Thm}[subsection]{Theorem}
\newtheorem{Cor}[subsection]{Corollary}
\newtheorem{Lem}[subsection]{Lemma}
\theoremstyle{definition}
\newtheorem{Rem}[subsection]{Remark}
\newtheorem{Def}[subsection]{Definition}
\newtheorem{Prob}[subsection]{Problem}
\renewcommand{\phi}{\varphi}
\newcommand{\NN}{\mathbb{N}}
\newcommand{\id}{\mathrm{id}}
\newcommand{\pr}{\mathrm{pr}}
\renewcommand{\emptyset}{\varnothing}
\newcounter{mynote}\setcounter{mynote}{0}
\begin{document}

\title{\bf{Fibrations and Coset Spaces for Locally Compact Groups}}
\author{Linus Kramer and Raquel Murat Garc\'ia}
\address{Linus Kramer, Raquel Murat Garc\'ia\newline\indent
Mathematisches Institut, Universit\"at M\"unster
\newline\indent
Einsteinstr. 62, 
48149 M\"unster,
Germany}

\begin{abstract}
    Let $G$ be a topological group and let $K,L\subseteq G$ be closed subgroups, with $K\subseteq L$.
    We prove that if $L$ is a locally compact pro-Lie group, then the map $q:G/K\to G/L$ is a 
    Serre fibration, and a Hurewicz fibration if $G$ is paracompact. As an application of this, we obtain two older results by Skljarenko, Madison and Mostert.
\end{abstract}

\maketitle

Corresponding author: Linus Kramer

Key words: Locally compact groups, homogeneous spaces, fibrations.

\section{Introduction}

Suppose that $G$ is a topological group and that $K,L\subseteq G$ are closed subgroups
with $K\subseteq L$. If we endow the coset spaces $G/K$ and $G/L$ with the quotient topologies,
we have a natural continuous and open $G$-equivariant map 
\[
q:G/K\to G/L
\]
and the question is how $q$ behaves from the viewpoint of homotopy theory: is $q$ a locally trivial bundle? Is it a fibration?
If the natural map $G\to G/L$ admits local sections, then $q$ is indeed a locally trivial bundle, see Lemma \ref{local section gives trivial bundle} below.
In particular, $q$ is a locally trivial bundle and a Hurewicz fibration if $G$ is a Lie group.
We cannot expect such a result for general locally compact groups, as the following example shows.

\medskip\noindent\textbf{Example}
{
   Put $H=\mathrm{SU}(2)$ and $G=H^\NN$. Then $G$ is a compact connected and locally connected group.
   The center of $G$ is
   the compact totally disconnected group $L=\{\pm1\}^\NN$. The natural map $q:G\to G/L$ 
   does not admit local sections, since otherwise $G$ would be locally homeomorphic to the space
   $G/L\times L$, which is not locally connected. In particular, $q$ is not a locally trivial bundle.}

   \medskip
In the case that $G$ is a compact group, Madison and Mostert proved in 1969 \cite{Madison} that $q:G/K\to G/L$ is always a Hurewicz fibration (Madison attributes the proof to Mostert).
This applies in particular to the example above. A modern proof for their result is given in Theorem 2.8 in \cite{HofmannKramer}.
Already in 1963, Skljarenko \cite{Skljarenko} had proved that $q$ is a Hurewicz fibration if $G$ is a locally compact group.
Apparently his results were not widely noticed at that time.
In his 1970 PhD thesis \cite{Wigner}, Wigner claimed that for every topological group $G$, the map $q:G\to G/L$ is a fibration, provided that $L\subseteq G$ is a locally compact subgroup. However, his proof appears to have gaps. We comment on this at the end of our article.
Our main result is as follows.

\medskip\noindent\textbf{Theorem}
{\em Let $G$ be a topological group and let $K,L\subseteq G$ be closed subgroups,
with $K\subseteq L$. If $L$ is a locally compact pro-Lie group, then the map
\[
q: G/K\to G/L
\]
is a Serre fibration. If $G$ is paracompact, then $q$ is a Hurewicz fibration.}

\medskip
As a consequence of this result, we recover the results by Skljarenko, Madison and Mostert.
Our proof follows ideas similar to the proofs in \cite{Madison} and  \cite{HofmannKramer},
and it relies on Palais' Slice Theorem \cite{Palais} and an important result
by Antonyan \cite{Antonyan}. It is rather
different from Skljarenko's proof \cite{Skljarenko}, which uses 
well-ordered chains of Lie quotients.

\subsection*{Acknowledgements.}
We thank the referees for several helpful remarks which improved our presentation.
This work is based on the second author's MSC thesis \cite{Murat}.
Both autors acknowledge support from  Germany’s Excellence Strategy 
EXC 2044-390685587, Mathematics M\"unster: Dynamics-Geometry-Structure.

\section{Generalities on bundles and fibrations}

Throughout, all spaces and topological groups are assumed to be Hausdorff unless stated otherwise.
Other topological assumptions will be stated explicitly.

We recall some terminology about bundles. A \emph{bundle} over $B$ is a continuous surjective map
$p:E\to B$, where $E$ and $B$ are Hausdorff spaces. We call $E$ the \emph{total space}, $B$ the base space, and for $b\in B$
we denote by $E_b=p^{-1}(b)$ the \emph{fiber} over $b$. A \emph{morphism} between
bundles $p:E\to B$ and $p':E'\to B$ is a continuous map $f:E\to E'$ with $p'\circ f=p$:
\[
\begin{tikzcd}
    E \arrow{r}{f}\arrow{d}[swap]{p} & E' \arrow{d}{p'} \\
    B \arrow[equal]{r} & B.
\end{tikzcd}
\]
The morphisms of bundles over $B$ form a category in an obvious way.
A bundle is called \emph{trivial} if it is isomorphic to a bundle of the form
$\pr_1:B\times F\to B$.

 For $A\subseteq B$ we put $E_A=p^{-1}(A)$. The restriction $p|_{E_A}:E_A\to A$ is then a bundle over $A$. 
  The bundle 
 $E\to B$ is called \emph{locally trivial} if every $b\in B$ has a neighborhood
 $V\subseteq B$ such that the restriction $E_V\to V$ is trivial.

A \emph{section} over $A\subseteq B$ is a continuous map $s:A\to E_A$,
 with $p\circ s=\id_A$. A section over a neighborhood $U$ of a point $b\in B$ is called a  \emph{local section}.

Let $p:E\to B$ be a bundle and let $Z$ be a topological space. The bundle has the 
\emph{homotopy lifting property (HLP)} with respect to $Z$ if the following holds:
given any two continuous maps $f:Z\times[0,1]\to B$ and $\tilde f_0:Z\times\{0\}\to E$ such that 
$p(\tilde f_0(z,0))=f(z,0)$, there exists a continuous extension $\tilde f:Z\times[0,1]\to E$
of $\tilde f_0$ with $p\circ \tilde f=f$.  This situation is illustrated by the commutative diagram
\[
\begin{tikzcd}
    Z\times\{0\} \arrow{r}{\tilde f_0}\arrow[hook]{d} & E \arrow{d}{p}\\
    Z\times[0,1] \arrow{r}{f} \arrow[dashed]{ru}{\tilde f}&B .
\end{tikzcd}
\]

A bundle which has the HLP for every space $Z$ is called a \emph{Hurewicz fibration}.
It is called a \emph{Serre fibration} if it has the HLP for every cube $[0,1]^m$.
In this case, it has the HLP with respect to every CW complex, cp.~\cite{Bredon} Theorem 6.4, Chapter VII.
Every locally trivial bundle is a Serre fibration; such a bundle is
a Hurewicz fibration if $B$ is paracompact, see \cite{Dugundji} Theorem 4.2, Chapter XX.

\section{Quotients by locally compact groups}

Suppose that $G$ is a topological group, and that $K,L\subseteq G$ are closed subgroups
with $K\subseteq L$. If we endow $G/K$ and $G/L$ with the quotient topologies with respect to
the canonical maps $G\to G/K$ and $G\to G/L$, then the canonical map $q:G/K\to G/L$
is a continuous open $G$-equivariant map whose fibers are homeomorphic to $L/K$.

We could not find a reference for this fact, so we supply a proof.
We consider the commutative diagram
\[
\begin{tikzcd}
    L \arrow[hook]{r} \arrow{d}[swap]{p_1} & G \arrow[equal]{r} \arrow{d}[swap]{p_2} & G \arrow{d}[swap]{p_3} \\
    L/K \arrow[hook]{r}{i}& G/K\arrow{r}{q} & G/L .
\end{tikzcd}
\]
The vertical maps $p_1,p_2,p_3$ are quotient maps, and $L$ carries the subspace topology.
This shows already that $i$ and $q$ are continuous. The maps $p_1,p_2,p_3$ are also open,
cp.~Lemma~6.2 in \cite{Stroppel}, and the quotients are Hausdorff, cp.~Proposition 6.6 in \emph{loc.cit}.
If $U\subseteq G/K$ is open, then $q(U)=p_3(p_2^{-1}(U))$ is also open, hence $q$
is an open map. We claim next that $i\circ p_1$ is an open map onto its image. 
Let $W\subseteq G$ be open and put $V=L\cap W$. Then $VK/K=(WK/K)\cap (L/K)$, i.e. $i(p_1(V))=p_2(W)\cap L/K$.
Hence $i\circ p_1$ is an open map onto its image, and thus $i$ maps $L/K$ homeomorphically onto its image in $G/K$.
Finally, all fibers of $q$ are homeomorphic under the $G$-action on $G/K$.

A \emph{Lie group} is a group which is at the same time a differentiable manifold,
such that the multiplication and inversion maps are smooth. We do not assume that Lie groups
are second countable; indeed, the identity component $G^\circ$ of a Lie group $G$ is automatically
open and second countable, and $G$ is metrizable and paracompact, cp. Proposition~9.1.15 and Theorem~10.3.25 in \cite{HilgertNeeb}. 
We recall also that closed subgroups of Lie groups
are again Lie groups. We refer to the excellent book \cite{HilgertNeeb} for generalities about Lie groups.
\begin{Lem}\label{local section gives trivial bundle}
    Let $G$ be a topological group with closed subgroups $K\subseteq L\subseteq G$.
    Suppose that the canonical map $G\to G/L$ admits a local section. Then $q:G/K\to G/L$
    is a locally trivial bundle. This holds in particular if $G$ is a Lie group.
\end{Lem}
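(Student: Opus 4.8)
The plan is to build an explicit trivialization over a neighborhood of the base point $o=eL\in G/L$ and then transport it everywhere by $G$-equivariance. Since $G$ acts transitively on $G/L$ and $q$ is $G$-equivariant, it suffices to trivialize $q$ over a single neighborhood of $o$: given a local section of $p_3\colon G\to G/L$ over a neighborhood of some $g_0L$, left translation by $g_0^{-1}$ produces a continuous section $\sigma\colon U\to G$ with $p_3\circ\sigma=\id_U$ over an open neighborhood $U$ of $o$. For an arbitrary point $gL$, the homeomorphism of $G/K$ induced by left translation with $g$ then carries a trivialization over $U$ to one over the neighborhood $gU$ of $gL$.

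Next I would write down the candidate trivialization and its candidate inverse,
\[
\Phi\colon U\times L/K\to q^{-1}(U),\quad \Phi(u,\ell K)=\sigma(u)\,\ell K,
\qquad
\Psi\colon q^{-1}(U)\to U\times L/K,\quad \Psi(gK)=\bigl(gL,\ \sigma(gL)^{-1}g\,K\bigr).
\]
Here $\Phi$ is well defined because $K\subseteq L$, and $\Psi$ is well defined because $\sigma(gL)^{-1}g\in L$ (as $\sigma(gL)$ and $g$ lie in the same $L$-coset) and is independent of the chosen representative $g$ of $gK$. A short computation using $p_3\circ\sigma=\id_U$ and $\ell\in L$ shows that $\Phi$ and $\Psi$ are mutually inverse bijections and that $q\circ\Phi=\pr_1$; hence $\Phi$ is a morphism of bundles over $U$ from the trivial bundle $\pr_1\colon U\times L/K\to U$ to $q\colon q^{-1}(U)\to U$. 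Thus the whole statement reduces to checking that both $\Phi$ and $\Psi$ are continuous.

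The continuity is where the real work lies, and it is bookkeeping with quotient topologies. For $\Phi$, I would first note that $(u,\ell)\mapsto \sigma(u)\ell K$ is continuous on $U\times L$ (compose $\sigma\times\id$ with multiplication $G\times L\to G$ and then $p_2$); since $p_1\colon L\to L/K$ is open, the map $\id_U\times p_1$ is open, hence a quotient map, and $\Phi$ therefore descends to a continuous map on $U\times L/K$. For $\Psi$ the key observation is that $\tilde U=p_3^{-1}(U)\subseteq G$ is open and $K$-saturated (because $K\subseteq L$), so that $p_2$ restricts to an open quotient map $\tilde U\to q^{-1}(U)$. On $\tilde U$ the assignment $g\mapsto\bigl(gL,\sigma(gL)^{-1}g\,K\bigr)$ is manifestly continuous and constant on the fibers of this quotient map, hence factors through the desired continuous $\Psi$. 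I expect the only genuinely delicate points to be the verification that $\id_U\times p_1$ is a quotient map and that $\tilde U$ is saturated; the rest is formal, and openness of $p_1,p_2,p_3$ has already been recorded above.

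Finally, for the Lie group case it remains only to verify the hypothesis. If $G$ is a Lie group then every closed subgroup, in particular $L$, is again a Lie group, and the quotient map $G\to G/L$ is a locally trivial principal $L$-bundle; in particular it admits local sections, cp.~\cite{HilgertNeeb}. Feeding this into the first part shows that $q$ is locally trivial.
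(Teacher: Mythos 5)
Your proposal is correct and follows essentially the same route as the paper: the same $G$-equivariance reduction, the same trivialization $(u,\ell K)\mapsto \sigma(u)\ell K$ with the same explicit inverse, and the same citation for local sections in the Lie group case. The only difference is that you spell out the continuity of both maps via open quotient maps and saturated preimages, details the paper's proof leaves implicit when asserting that $h$ is a homeomorphism.
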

\begin{proof}
    The group $G$ acts transitively on $G/L$ and equivariantly on $q:G/K\to G/L$. The existence of a local section near some point
    $x\in G/L$ therefore implies the existence of a local section near every point $y\in G/L$.
    Let $s:U\to G$ be a local section, where $U\subseteq G/L$ is a nonempty open subset.
    Then the map $h:U\times L/K\to W=q^{-1}(U)$ that maps $(u,\ell K)$ to $s(u)\ell K$
    is a homeomorphism, with inverse $w\longmapsto(q(w),s(q(w))^{-1}w)$, which makes the diagram
    \[
    \begin{tikzcd}
        U\times L/K \arrow{r}{h} \arrow{d}[swap]{\pr_1} & W \arrow{d}{q|_W} \\
        U \arrow[equal]{r} & U
    \end{tikzcd}
    \]
    commute.
    For the last claim, we note that $G\to G/L$ admits local sections if $G$ is a Lie group, cp.
    Corollary 10.1.11 in \cite{HilgertNeeb} or Theorem 3.58 in \cite{Warner}.
\end{proof}
We recall that a continuous surjective map is called \emph{proper} (or \emph{perfect})
if it is closed
and has compact fibers.
\begin{Lem}\label{ProperLemma}
    Let $G$ be a topological group with locally compact subgroups $K\subseteq H\subseteq G$.
    If $H/K$ is compact, then the canonical map $q:G/K\to G/H$ is proper.
\end{Lem}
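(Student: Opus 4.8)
The plan is to check the two defining properties of a proper map separately. The compactness of the fibers comes for free: as recorded in the discussion opening this section, the fibers of $q$ are homeomorphic to $H/K$, which is compact by hypothesis. Thus the entire content of the lemma is the assertion that $q$ is a \emph{closed} map, and this is where the compactness of $H/K$ must really be used.

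To prove closedness I would pass to $G$ via the canonical quotient maps $p_2\colon G\to G/K$ and $p_3\colon G\to G/H$, which satisfy $q\circ p_2=p_3$. Given a closed set $A\subseteq G/K$, put $B=p_2^{-1}(A)$; then $B$ is closed in $G$ and right $K$-invariant, i.e.\ $BK=B$, and since $p_2$ is surjective one has $q(A)=q(p_2(B))=p_3(B)$. Because $p_3$ is a quotient map, $q(A)$ is closed in $G/H$ if and only if $p_3^{-1}(q(A))=p_3^{-1}(p_3(B))=BH$ is closed in $G$. So everything reduces to showing that $BH$ is closed whenever $B$ is closed and $BK=B$.

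The key step, and the place where I expect the real work to lie, is the production of a \emph{compact} set $C\subseteq H$ with $KC=H$. Granting this, the argument finishes quickly: since $BK=B$ one has $BH=B(KC)=(BK)C=BC$, and the product of a closed set and a compact set in a topological group is closed (take a net $b_\alpha c_\alpha\to g$ with $b_\alpha\in B$, $c_\alpha\in C$, extract a subnet with $c_\alpha\to c\in C$, and observe that $b_\alpha=(b_\alpha c_\alpha)c_\alpha^{-1}\to gc^{-1}\in B$ by continuity of multiplication and inversion, whence $g\in BC$). To build $C$ I would use that $H$ is locally compact, so $e$ has a compact neighborhood $N$ in $H$, together with the fact that the right coset space $K\backslash H$ is compact, being homeomorphic to $H/K$ via $hK\mapsto Kh^{-1}$. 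Writing $\pi\colon H\to K\backslash H$ for the (open) quotient map, the right $H$-translates of the open set $\pi(\operatorname{int}N)$ cover $K\backslash H$ by transitivity of the action; a finite subcover $\pi(\operatorname{int}N)h_1,\dots,\pi(\operatorname{int}N)h_n$ then yields the compact set $C=Nh_1\cup\cdots\cup Nh_n$ with $\pi(C)=K\backslash H$, that is, $KC=H$. The only points needing care are the openness of $\pi$ and the identification of $K\backslash H$ with $H/K$, both standard for topological groups; the genuinely crucial observation is that the right $K$-invariance of $B$ lets the noncompact factor $K$ in $H=KC$ be absorbed, leaving only the compact factor $C$.
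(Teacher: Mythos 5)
Your proof is correct and follows essentially the same route as the paper: reduce closedness of $q$ to closedness of $BH$ in $G$, produce a compact set $C\subseteq H$ with $H=KC$ from local compactness of $H$ and compactness of the coset space, and conclude via $BH=BC$ with ``closed times compact is closed.'' The only differences are cosmetic: you prove the last fact by a net argument where the paper cites Lemma~3.19 of \cite{Stroppel}, and you are in fact slightly more careful than the paper about the left/right coset issue (working in $K\backslash H$ to get $H=KC$ rather than $H=CK$, which is exactly what the right $K$-invariance of $B$ requires).
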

\begin{proof}
    Every $h\in H$ has a compact neighborhood $D_h$ in $H$. Since $H/K$ is compact, there are finitely
    many elements $h_1,\ldots, h_k\in H$ such that $p(D_{h_1})\cup \cdots\cup p(D_{h_k})=H/K$,
    where $p:H\to H/K$ is the canonical map.
    We put $C=D_{h_1}\cup \cdots\cup D_{h_k}$ and we note that $H=KC$. 
    Suppose that $A\subseteq G/K$ is closed, with closed
    preimage $B$ in $G$. The preimage of $q(A)\subseteq G/H$ in $G$ is then the closed set 
    $BH=BC$, cp. Lemma~3.19 in \cite{Stroppel},
    hence $q(A)$ is closed. The fibers of $q$ are homeomorphic to $H/K$ and therefore compact.
\end{proof}

\begin{Lem}\label{Limit}
    Let $G$ be a topological group, let $(I,\leq)$ be a nonempty linearly ordered set and let 
    $(H_i)_{i\in I}$ be a family of locally compact subgroups of $G$, such that 
    $H_i\subseteq H_j$ 
    holds for all $i\leq j$. Put $H=\bigcap_{i\in I}H_i$.
    The natural maps $G/H_i\to G/H_j$ make up a projective system in the category of topological spaces and continuous maps.
    If the quotients $H_i/H$ are compact, then the natural map
    \[
    G/H\to \varprojlim G/H_i.
    \]
    is a homeomorphism.
\end{Lem}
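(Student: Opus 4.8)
The plan is to analyse the natural map
\[
\phi\colon G/H\to\varprojlim G/H_i,\qquad gH\mapsto(gH_i)_{i\in I},
\]
and to show that it is a continuous closed bijection, hence a homeomorphism. That $\phi$ is well defined follows from $H\subseteq H_i$ for all $i$, and it is continuous because each component $\pi_i\colon G/H\to G/H_i$ is continuous and the limit carries the subspace topology of the product. Injectivity is immediate from $H=\bigcap_iH_i$: if $gH_i=g'H_i$ for all $i$, then $g^{-1}g'\in\bigcap_iH_i=H$. Before anything else I would record that $H$ is itself locally compact, being a closed subgroup of the locally compact group $H_i$ (it is closed as an intersection of the closed subgroups $H_j$). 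Together with the hypothesis that $H_i/H$ is compact, Lemma~\ref{ProperLemma}, applied to $H\subseteq H_i\subseteq G$, then shows that every projection $\pi_i\colon G/H\to G/H_i$ is proper, and in particular closed.

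The engine of the proof is a single compactness argument, which I would use twice. Suppose $(B_i)_{i\in I}$ is a chain of nonempty closed subsets of $G$, each saturated under right translation by $H$ and each contained in a single coset of $H_i$, with $B_i\subseteq B_j$ whenever $i\le j$. Fixing any index $i_0$, the images of the $B_i$ with $i\le i_0$ under the quotient map $G\to G/H$ are nonempty closed subsets of a single compact coset space of the form $g_{i_0}(H_{i_0}/H)$, and they form a chain; by the finite intersection property their intersection is nonempty, and the indices $i>i_0$ impose no further condition because then $B_i\supseteq B_{i_0}$. Hence $\bigcap_{i\in I}B_i\neq\emptyset$, and since $\bigcap_iH_i=H$ any element of this intersection lies in a single $H$-coset.

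For surjectivity I would apply this to the cosets $A_i\subseteq G$ that are the preimages of the coordinates $x_i$ of a given point $(x_i)_{i}\in\varprojlim G/H_i$: compatibility in the limit makes $(A_i)$ a chain of the required form, so $\bigcap_iA_i$ contains some $g$, which then satisfies $gH_i=x_i$ for all $i$, i.e. $\phi(gH)=(x_i)_i$. For the homeomorphism property it suffices, since $\phi$ is a bijection, to show that $\phi$ is closed. Given a closed set $F\subseteq G/H$ with closed, $H$-saturated preimage $\tilde F\subseteq G$, each image $\pi_i(F)$ is closed by properness of $\pi_i$, so $\bigcap_i\rho_i^{-1}(\pi_i(F))$ is closed in the limit, where $\rho_i$ denotes the limit projection. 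The inclusion $\phi(F)\subseteq\bigcap_i\rho_i^{-1}(\pi_i(F))$ is clear; for the reverse inclusion I would take a point of the right-hand side, write it as $\phi(gH)$ using surjectivity, and apply the compactness argument to $B_i=\tilde F\cap gH_i$ to produce an element of $\tilde F\cap gH$, forcing $gH\in F$. This gives $\phi(F)=\bigcap_i\rho_i^{-1}(\pi_i(F))$, so $\phi$ is closed.

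I expect the genuine obstacle to be the continuity of $\phi^{-1}$, that is, the closedness of $\phi$: a continuous bijection of the kind above need not be a homeomorphism, and it is precisely the compactness of the fibres $H_i/H$ that rescues both surjectivity and closedness through the finite intersection property. The routine points—well-definedness, continuity, injectivity, and the bookkeeping that $H$ is locally compact so that Lemma~\ref{ProperLemma} applies—I would dispatch quickly, and concentrate on setting up the chain $(B_i)$ correctly so that the single compactness lemma covers both uses.
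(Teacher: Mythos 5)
Your proof is correct, and while its overall skeleton (continuous closed bijection, with Lemma~\ref{ProperLemma} supplying properness and the compactness of $H_i/H$ supplying the finite intersection property) matches the paper, your treatment of the crucial closedness step is genuinely different. The paper factors the natural map as $f=p\circ d$, where $d:G/H\to\prod_{i\in I}G/H$ is the diagonal and $p:\prod_{i\in I}G/H\to\prod_{i\in I}G/H_i$ is the product of the proper projections; it then cites Engelking for two nontrivial facts, namely that an arbitrary product of proper maps is proper (hence closed) and that the diagonal map is closed. You instead prove closedness by hand: you identify $\phi(F)$ with the closed set $\bigcap_i\rho_i^{-1}(\pi_i(F))$, establishing the reverse inclusion by the same chain-plus-finite-intersection-property argument that gives surjectivity (applied to $B_i=\tilde F\cap gH_i$). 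Your route is more elementary and self-contained -- one compactness lemma does double duty, and no external theorem on products of proper maps is needed -- at the cost of being somewhat longer; the paper's route is shorter but leans on the cited machinery. One small point both treatments should make explicit: to apply Lemma~\ref{ProperLemma} one needs $H=\bigcap_i H_i$ to be locally compact, which you justify (correctly) via closedness of the $H_j$; this in turn rests on the standard fact that a locally compact subgroup of a Hausdorff group is closed, which is worth citing rather than treating as automatic.
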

\begin{proof}
    One model for the projective limit $\varprojlim G/H_i$ consists of all elements $(g_iH_i)_{i\in I}$ of $\prod_{i\in I}G/H_i$ with the property that
    $g_iH_j=g_jH_j$ holds for $i,j$ with $i\leq j$, cp. Appendix Two in \cite{Dugundji}. 
    Given such an element $(g_iH_i)_{i\in I}$, the collection of compact sets $\{g_iH_i/H\subseteq G/H\mid i\in I\}$
    has the finite intersection property and contains therefore a common element $gH\in G/H$. 
    Then $g_iH_i=gH_i$ holds for all $i$. 
    We consider the natural continuous map $f:G/H\to\varprojlim G/H_i$
    that maps $gH$ to $(gH_i)_{i\in I}$.
    This map $f$ has a two-sided inverse $t$ that maps $(gH_i)_{i\in I}$ to $gH=\bigcap_{i\in I}gH_i$.
    In particular, $f$ is a continuous bijection. We claim that $f$ is a closed map, and hence
    a homeomorphism.

    By Lemma~\ref{ProperLemma}, the maps $p_i:G/H\to G/H_i$ are proper.
    Therefore the product map
    $p:\prod_{i\in I}G/H\to\prod_{i\in I}G/H_i$ is also a proper
    map by Theorem 3.7.9 in \cite{Engelking}, and hence closed. The diagonal map $d:G/H\to \prod_{i\in I}G/H$
    is also closed, cp.~Corollary 2.3.21 in \emph{loc.cit}. Hence $f=p\circ d$ is closed.
\end{proof}
Let $G$ be a topological group, let $X$ be a space and suppose that the group $G$ acts (from the right) on the set $X$. 
We say that this action is a \emph{right transformation group} if the action map $X\times G\to X$ is continuous. For each element $x\in X$, we denote its stabilizer by $G_x=\{g\in G|xg=x\}$.
Left transformation groups are defined analogously.
We recall some terminology and results due to Palais~\cite{Palais}.
\begin{Def}
    Let $X\times G\to X$ be a (right) transformation group. For subsets $A,B\subseteq X$ we put
    $G_{A,B}=\{g\in G\mid (Ag)\cap B\neq\emptyset\}$. If $A$ and $B$ are open, then 
    $G_{A,B}$ is an open subset of $G$.
    We say that the action is a \emph{Cartan action} if $X$ is completely regular and if
    every point $x\in X$ has an open
    neighborhood $U$ such that $G_{U,U}$ has compact closure in $G$. This condition implies that
    $G$ is locally compact. 
\end{Def}
\begin{Lem}\label{CartanActionExample}
    Let $G$ be a topological group with a locally compact subgroup $L\subseteq G$,
    and let $N\unlhd L$ be a closed normal subgroup of $L$.
    Then the natural right action of $L/N$ on $G/N$ is a Cartan action.
\end{Lem}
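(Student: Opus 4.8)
The plan is to verify directly the two defining properties of a Cartan action for the right action of $H:=L/N$ on $G/N$, namely that $G/N$ is completely regular and that every point admits an open neighborhood $U$ with $\overline{H_{U,U}}$ compact. Before anything else I would record two standard structural facts. Since $L$ is locally compact it is closed in $G$, and as $N$ is closed in $L$ it is then closed in $G$ as well; consequently $G/N$ is the coset space of a topological group by a closed subgroup, hence completely regular. Moreover $H=L/N$ is itself locally compact, being the quotient of the locally compact group $L$ by the closed normal subgroup $N$, so it is the right kind of acting group.

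Next I would use homogeneity to reduce the neighborhood condition to a single point. The group $G$ acts on $G/N$ from the left by $g'\cdot(gN)=g'gN$; this action is transitive, and it commutes with the right $H$-action, so every left translation $\lambda_{g'}$ is an $H$-equivariant homeomorphism of $G/N$. Since $H_{\lambda_{g'}(A),\lambda_{g'}(B)}=H_{A,B}$ for any equivariant homeomorphism, it suffices to produce one suitable neighborhood of the base point $\bar e=N\in G/N$; translating it by $G$ then covers every point.

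The heart of the argument is the construction at the base point. Let $\pi:G\to G/N$ and $\rho:L\to L/N$ denote the quotient maps, and choose a compact neighborhood $\bar C$ of $\bar e$ in $L/N$ with interior $\bar V$. Then $S=L\setminus\rho^{-1}(\bar V)$ is closed in $L$, hence closed in $G$, and $e\notin S$. By continuity of $(x,y)\mapsto x^{-1}y$ there is an open neighborhood $\tilde U$ of $e$ in $G$ with $\tilde U^{-1}\tilde U\cap S=\emptyset$, that is, $L\cap\tilde U^{-1}\tilde U\subseteq\rho^{-1}(\bar C)$. Setting $U=\pi(\tilde U)$, an open neighborhood of $\bar e$, a direct unwinding of the definitions gives $H_{U,U}=\rho\bigl(L\cap\tilde U^{-1}\tilde U\,N\bigr)=\rho\bigl(L\cap\tilde U^{-1}\tilde U\bigr)\subseteq\bar C$, so $\overline{H_{U,U}}$ is compact, as required.

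The main obstacle I anticipate is the bookkeeping in this last step: translating the abstract set $H_{U,U}$ into the concrete subset $\tilde U^{-1}\tilde U$ of $G$, and verifying that the normal subgroup $N$ genuinely cancels when one passes to $L/N$. Writing $\ell=an$ with $a\in\tilde U^{-1}\tilde U$ and $n\in N$ forces $a=\ell n^{-1}\in L$, so $a\in L\cap\tilde U^{-1}\tilde U$ with $aN=\ell N$; this is exactly where normality of $N$ in $L$ enters, the same ingredient that makes the right action of $L/N$ well defined in the first place. I would also flag at the outset that the closedness of $S$ in $G$, on which the continuity argument rests, depends on $L$ being closed in $G$, so that observation deserves to be isolated early.
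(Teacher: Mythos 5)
Your proof is correct and takes essentially the same approach as the paper: both arguments verify the Cartan condition directly by producing an identity neighborhood in $G$ whose difference set meets $L$ in a set mapping into a compact subset of $L/N$, followed by the same cancellation bookkeeping with $N$ (writing $\ell = an$, $a=\ell n^{-1}\in L$) to bound $H_{U,U}$. The only cosmetic differences are that the paper sources compactness from $L$ itself (a symmetric $W$ with $WW\cap L$ precompact in $L$) and treats all points at once via $U=gWN/N$, whereas you reduce to the base point by an explicit homogeneity argument and pull compactness back from $L/N$, making explicit along the way the closedness of $L$ and $N$ in $G$ that the paper uses implicitly.
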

\begin{proof}
    First of all, $G/N$ is completely regular, cp.~II.8.14 in \cite{HewittRoss}.
    Let $W\subseteq G$ be an open symmetric identity
    neighborhood in $G$ such that $WW\cap L$ has compact closure in $L$.
    For $g\in G$ we put $U=gWN/N\subseteq G/N$. 
    Then $U$ is an open neighborhood of $gN$. Let $\ell\in L$.
    If $U\ell N\cap U\neq\emptyset$,
    then $gw_1\ell n=gw_2$ holds for certain $w_1,w_2\in W$ and $n\in N$, whence 
    $\ell n\in WW$ and $\ell N\in WWN/N$. The right-hand side has compact closure in $L/N$.
\end{proof}

\begin{Def}
    Let $X\times H\to X$ be a transformation group and let $K\subseteq H$ be a closed subgroup.
    A subset $S\subseteq X$ is called a \textit{$K$-kernel} if there exists an $H$-equivariant map $f: SH\to K\backslash H$ 
    such that $f^{-1}(K)=S$. If in addition $SH$ is open in $X$, then $S$ is called a \textit{$K$-slice}. For $x\in X$,
    a \textit{slice at $x$} is an $H_x$-slice that contains $x$, where 
    $H_x$ denotes the stabilizer of $x$.
\end{Def}

Palais proved that for Cartan actions $X\times H\to X$ of Lie groups there exist 
$H_x$-slices for every $x\in X$, cp.~Theorem~2.3.3 in \cite{Palais}. 
Abels and Lütkepohl later considered the existence of slices in a somewhat different setting \cite{Abels}. 
We recall that a group action $X\times H\to X$ is called \emph{free} if $H_x=\{e\}$ holds for 
all $x\in X$.
As an immediate consequence of Palais' theorem applied to free actions, we get the following result.

\begin{Thm}[Palais]\label{PalaisThm}
    Let $X\times H\to X$ be a Cartan action of a Lie group $H$. If the action is free,
    then the orbit map $q:X\to X/H$ admits local sections. Moreover, $q$ is a locally trivial bundle.
\end{Thm}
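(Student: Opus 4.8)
The plan is to feed the free action into Palais' slice theorem and then unwind what a slice means in the free case. Since the action is free, every stabilizer $H_x$ is trivial, so a slice at $x$ is an $\{e\}$-slice containing $x$; thus for each $x\in X$ Palais' slice theorem \cite{Palais} provides a subset $S\ni x$ with $SH$ open in $X$, together with an $H$-equivariant map $f\colon SH\to\{e\}\backslash H=H$ satisfying $f^{-1}(e)=S$. Here $\{e\}\backslash H$ is just $H$ with $H$ acting by right translation, so equivariance reads $f(yh)=f(y)h$ for all $y\in SH$ and $h\in H$.

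First I would show that $S$ is a set-theoretic transversal for the $H$-orbits inside $SH$. For $y\in SH$, the point $yf(y)^{-1}$ satisfies $f\bigl(yf(y)^{-1}\bigr)=f(y)f(y)^{-1}=e$, so it lies in $S$; hence every orbit in $SH$ meets $S$. If $yh_1,yh_2\in S$, then $f(y)h_1=e=f(y)h_2$, forcing $h_1=h_2$, and by freeness the orbit meets $S$ in exactly one point. This identifies $S$ with the orbit space over the open set $V=q(SH)$.

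Next I would build the local section. The map $SH\to X$, $y\mapsto yf(y)^{-1}$, is continuous (a composite of $f$, inversion, and the action map) and constant on orbits by the computation above. Since $q$ is open and $SH$ is an open $H$-invariant set, the restriction $q|_{SH}\colon SH\to V$ is a quotient map onto the open set $V$, so this orbit-invariant map descends to a continuous $s\colon V\to X$ with $s(q(y))=yf(y)^{-1}$ and $q\circ s=\id_V$. As $x$ was arbitrary, $q$ admits local sections everywhere.

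Finally, local triviality follows by the same construction as in Lemma~\ref{local section gives trivial bundle}. Over $V$ I would define $\Phi\colon V\times H\to q^{-1}(V)=SH$ by $\Phi(v,h)=s(v)h$, with candidate inverse $z\mapsto(q(z),f(z))$; using $f(s(v))=e$ (since $s(v)\in S$) together with equivariance of $f$, one checks that both composites are the identity, and both maps are continuous, so $q$ is trivial over $V$. I expect the only real subtlety to be the descent step: one must verify that $q|_{SH}$ is a genuine quotient map onto the open saturated image $V$, which is what turns the continuous orbit-invariant map $y\mapsto yf(y)^{-1}$ into a continuous section. Freeness is exactly what makes this assignment single-valued and makes $z\mapsto f(z)$ the correct fiber coordinate.
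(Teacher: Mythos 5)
Your proposal is correct and follows essentially the same route as the paper: both apply Palais' slice theorem to the free action to obtain the open set $SH$ and the equivariant map $f\colon SH\to H$ with $f^{-1}(e)=S$, and both use the formulas $y\mapsto yf(y)^{-1}$ and $z\mapsto f(z)$ as the two coordinates of the trivialization and the section. The only (harmless) differences are presentational: you construct the section first by descending the orbit-invariant map through the open quotient $q|_{SH}$ and then assemble the trivialization, whereas the paper first exhibits the homeomorphism $S\times H\cong SH$ and reads off the section from it.
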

\begin{proof}
    The map $q$ is an $H$-principal bundle in the terminology of~\cite{Palais}.
    By \emph{op.cit.} Theorem ~2.3.3, there exists a slice $S$ at every point $x\in X$.
    This means in our situation that there is a subset $S\subseteq X$ such that
    $SH\subseteq X$ is open, and a continuous equivariant map $f:SH\to H$
    with $S=f^{-1}(e)$. This implies that $f(xh)=h$ for all $x\in S$, $h\in H$.
    Hence the map 
    $S\times H\to SH$ that maps $(x,h)$ to $xh$ is a homeomorphism,
    with inverse $xh\mapsto(xhf(xh)^{-1},f(xh))$.
    
    The set $q(SH)=SH/H$ is open in $X/H$ and the restriction $S\times H\cong SH\to SH/H$ is a trivial bundle.
    Finally, the map $s:SH/H\to S$ that maps $xH$ to
    $x$ is continuous and hence a local section for $q$.
\end{proof}
\begin{Cor}\label{LocallyTrivial}
    Let $G$ be a topological group with closed subgroups $P\subseteq Q\subseteq G$.
    Assume that $N$ is a compact normal subgroup of $Q$, and that $Q/N$ is a Lie group.
    Then the bundle $p:G/PN\to G/Q$ is locally trivial.
    In particular, $p$ is a Serre fibration, and a Hurewicz fibration if $G$ is paracompact.
\end{Cor}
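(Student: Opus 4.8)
The plan is to produce a locally trivial principal $Q/N$-bundle over $G/Q$ from Palais' theorem and then to descend it to $p$ by quotienting out $PN/N$. First I would record two structural facts. Since $P$ is closed and $N$ is compact, the product $PN$ is a closed subgroup of $Q$; and since $Q/N$ is a Lie group, hence locally compact, while $N$ is compact, the quotient map $Q\to Q/N$ is proper, so that $Q$ is itself locally compact. We thus have a chain of closed subgroups $N\subseteq PN\subseteq Q\subseteq G$ in which $N$ is a compact normal subgroup of the locally compact group $Q$.

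Next I would set up the group action. Applying Lemma~\ref{CartanActionExample} with $L=Q$, the natural right action of the Lie group $Q/N$ on $G/N$ is a Cartan action, and it is free: the stabilizer of a coset $gN$ consists of those $qN\in Q/N$ with $gqN=gN$, i.e. with $q\in N$, so the stabilizer is trivial. Palais' Theorem~\ref{PalaisThm} then shows that the orbit map $G/N\to(G/N)/(Q/N)$ admits local sections and is a locally trivial principal $Q/N$-bundle. I would identify the orbit space $(G/N)/(Q/N)$ with $G/Q$: both the orbit map $G/N\to(G/N)/(Q/N)$ and the canonical map $\pi:G/N\to G/Q$ are open (the latter by the discussion in Section~3), and $\pi$ factors as the orbit map followed by a natural bijection $(G/N)/(Q/N)\to G/Q$, which is therefore open and hence a homeomorphism. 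Consequently $\pi:G/N\to G/Q$ admits local sections and is locally trivial with fibre $Q/N$.

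It remains to descend from $G/N$ to $G/PN$. Let $\rho:G/N\to G/PN$ be the canonical map $gN\mapsto gPN$; it is a surjective open quotient map, realizing $G/PN$ as the quotient $(G/N)/(PN/N)$ by the right $PN/N$-action, and it satisfies $p\circ\rho=\pi$. Given a local section $s:U\to G/N$ of $\pi$ over an open set $U\subseteq G/Q$, I would define $h:U\times Q/PN\to p^{-1}(U)$ by $h(u,qPN)=\rho\bigl(s(u)\cdot qN\bigr)$. This is well defined and continuous, commutes with the projections to $U$, and is a bijection: surjectivity uses that $\pi$ is principal, so $s(u)\cdot(Q/N)$ is the whole fibre $\pi^{-1}(u)$, and injectivity uses freeness of the $Q/N$-action together with the fact that $\rho$ identifies exactly the $PN/N$-orbits. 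Since $G$ acts transitively on $G/Q$ and equivariantly on $p$, such local sections exist near every point, so the sets $U$ cover $G/Q$.

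The hard part will be the last step: verifying that the continuous bijection $h$ is a homeomorphism, i.e. that forming the $PN/N$-quotient is compatible with the product decomposition of $\pi^{-1}(U)$. The cleanest route is to note that the trivialization $U\times Q/N\to\pi^{-1}(U)$, $(u,\bar q)\mapsto s(u)\,\bar q$, is a homeomorphism which is $PN/N$-equivariant for the right action on the second factor, and that the coset map $\bar\rho:Q/N\to Q/PN$ is open, so that $\id_U\times\bar\rho$ is again a quotient map; passing to $PN/N$-orbits on both sides then identifies $U\times Q/PN$ with $\pi^{-1}(U)/(PN/N)=p^{-1}(U)$ and yields exactly $h$. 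This establishes that $p$ is locally trivial. The final assertions then follow from Section~2: a locally trivial bundle is a Serre fibration, and it is a Hurewicz fibration once the base is paracompact, which $G/Q$ is whenever $G$ is.
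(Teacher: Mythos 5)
Your proof is correct and takes essentially the same route as the paper's: Lemma~\ref{CartanActionExample} together with Palais' Theorem~\ref{PalaisThm} yields local sections of $G/N\to G/Q$, and your map $h(u,qPN)=s(u)\,qPN$ is exactly the paper's trivialization, with your quotient-map descent argument playing the role of the paper's openness argument for the multiplication map $G/N\times Q/PN\to G/PN$. The only point to flag is your closing assertion that $G/Q$ is paracompact whenever $G$ is: this is a nontrivial theorem of Antonyan (Corollary~1.5 in \cite{Antonyan}, which the paper cites at this step) rather than a formality, since paracompactness does not in general pass to quotient spaces.
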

\begin{proof}
    The Lie group action $G/N\times Q/N\to G/N$ is free, hence the bundle
    $G/N\to (G/N)/(Q/N)\cong G/Q$ has local sections by Theorem~\ref{PalaisThm}.
    Let $s:U\to G/N$ be a section over an open set $U\subseteq G/Q$.
    We note that the map $G/N\times Q/PN\to G/PN$ that maps
    $(gN,qPN)$ to $gqPN$ is continuous, because arrow (1) in the diagram
    \[
    \begin{tikzcd}
        G\times Q\arrow{r}{\text{multiply}}\arrow{d}[swap]{(1)} & G\arrow{d} \\
        G/N\times Q/PN\arrow{r} & G/PN.
    \end{tikzcd}
    \]
    is a product of open maps and hence open.
    Then the map $h:U\times Q/PN\to G/PN$ that maps $(gQ,qPN)$ to $s(gQ)qPN$ is a trivialization
    of the bundle $G/PN\to G/Q$ over $U$.

    Every locally trivial bundle is a Serre fibration by Theorem~4.2 in Chapter XX of \cite{Dugundji}. If $G$ is paracompact, then
    $G/Q$ is also paracompact by Corollary~1.5 in \cite{Antonyan} and hence $G/PN\to G/Q$ is a Hurewicz fibration by Theorem~4.2 in 
    Chapter~XX
    of \cite{Dugundji}.
\end{proof}

\begin{Def}\label{ProLieDef}
    We call a topological group $L$ a \emph{locally compact pro-Lie group} if every identity neighborhood $U\subseteq L$ contains a compact normal subgroup $N$ such that $L/N$ is a Lie group. In this case, every closed subgroup $H\subseteq L$ is also 
    a locally compact pro-Lie group. Indeed, $H/H\cap N\cong HN/N\subseteq L/N$ is then a closed subgroup of the Lie group $L/N$,
    and therefore a Lie group.
    We refer to the excellent book \cite{HMPro} on pro-Lie groups, in 
    particular to Definition~3.25 and Remark 1.31.

    The solution of Hilbert's 
    5th Problem due to Iwasawa, Gleason, Yamabe, Montgomery and Zippin says that every locally compact group $L$ which is almost connected
    (i.e. $L/L^\circ$ is compact) is a locally compact pro-Lie group. Moreover, every locally compact group has an open almost connected subgroup which is a locally compact pro-Lie group.
    References are 
    p.~175 in \cite{MontgomeryZippin} or Theorem~6.0.11 in \cite{Tao}, and Theorem~4.4 in \cite{HofmannKramer}.
\end{Def}

\medskip
Now we get to our main result.

\begin{Thm}\label{MainThm}
    Let $G$ be a topological group and let $K, L \subseteq G$ be closed subgroups, with $K\subseteq L$. Assume that $L$ is a locally compact pro-Lie group.
    Then the bundle
    \[q:G/K\to G/L\]
    is a Serre fibration. If $G$ is paracompact, then $q$ is a
    Hurewicz fibration.
\end{Thm}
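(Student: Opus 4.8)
The plan is to approximate $q$ by the Lie-level quotients furnished by Corollary~\ref{LocallyTrivial} and to realise $G/K$ as their inverse limit via Lemma~\ref{Limit}. Let $\mathcal N$ denote the set of compact normal subgroups $N\unlhd L$ with $L/N$ a Lie group. By Definition~\ref{ProLieDef} the family $\mathcal N$ is downward directed (for $N,N'\in\mathcal N$ one has $N\cap N'\in\mathcal N$, since $L/(N\cap N')$ embeds into the Lie group $L/N\times L/N'$) and $\bigcap_{N\in\mathcal N}N=\{e\}$; as $K$ is closed and each $KN$ is closed (product of a closed and a compact set), this gives $\bigcap_{N\in\mathcal N}KN=K$. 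For each $N\in\mathcal N$ the subgroup $KN$ is closed in $L$, hence locally compact, with $KN/K\cong N/(N\cap K)$ compact, while $KN/N\cong K/(K\cap N)$ is a closed subgroup of the Lie group $L/N$ and is therefore again a Lie group.

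First I would record two consequences of Corollary~\ref{LocallyTrivial}. Taking $P=K$ and $Q=L$ shows that each $q_N\colon G/KN\to G/L$ is a locally trivial bundle, hence a Serre fibration, and a Hurewicz fibration once $G$ is paracompact. Taking $Q=KN$ with its compact normal subgroup $N'\subseteq N$ (here $KN/N'$ is a closed, hence Lie, subgroup of $L/N'$) and $P=K$ shows that for $N'\subseteq N$ the bonding map $G/KN'\to G/KN$ is likewise locally trivial; by Lemma~\ref{ProperLemma} it is moreover proper, with compact fibre. Finally Lemma~\ref{Limit}, whose proof applies verbatim to the directed family $(KN)_{N\in\mathcal N}$ (the quotients $KN/K$ being compact), identifies $G/K=\varprojlim_{N\in\mathcal N}G/KN$ with its inverse-limit topology; in particular a map into $G/K$ is continuous precisely when each of its composites to the $G/KN$ is.

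With this in place, the homotopy lifting problem for $q$ becomes one of assembling compatible lifts across the tower. Given $f\colon Z\times[0,1]\to G/L$ and $\tilde f_0\colon Z\times\{0\}\to G/K$ as in the HLP, each composite of $f$ and the projected $\tilde f_0^{(N)}$ with $G/KN$ poses a lifting problem solved by the fibration $q_N$; what must be produced is a single family $(\tilde f^{(N)})_N$ compatible under the bonding maps, for then the universal property of the inverse limit supplies the desired $\tilde f\colon Z\times[0,1]\to G/K$. The main obstacle is exactly this compatibility: one cannot resolve the levels one at a time, because an infinite intersection of members of $\mathcal N$ need not lie in $\mathcal N$, so the intermediate quotients leave the class of Lie groups and their bonding maps cease to be visibly fibrations; all finite Lie levels must therefore be treated simultaneously rather than transfinitely. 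For the Serre case ($Z$ a cube) I would pull the whole system back along $f$: each $f^\ast(G/KN)$ is locally trivial over the paracompact space $Z\times[0,1]$, hence a Hurewicz fibration there by Theorem~4.2 in Chapter~XX of \cite{Dugundji}, and the proper, compact-fibre bonding maps confine and cohere the lifts $\tilde f^{(N)}$ inside compact preimages, the inverse limit $f^\ast(G/K)=\varprojlim_N f^\ast(G/KN)$ then furnishing the extension $\tilde f$. When $G$ is paracompact the same assembly runs directly over $G/L$, which is paracompact by Corollary~1.5 in \cite{Antonyan}, and upgrades the conclusion to a Hurewicz fibration. I expect the coherent choice of lifts—arranging that the proper, compact-fibre bonding maps deliver a genuinely compatible, hence continuous, family in the limit—to be the delicate heart of the argument.
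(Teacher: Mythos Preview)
Your proposal correctly sets up the inverse system $G/K\cong\varprojlim_{N\in\mathcal N}G/KN$ and identifies the coherence of lifts as the crux, but it does not actually resolve it: the sentence ``I expect the coherent choice of lifts\ldots to be the delicate heart of the argument'' is an acknowledgement of the gap, not a proof. Pulling back along $f$ and invoking compactness of the fibres does not by itself produce a compatible family $(\tilde f^{(N)})_N$; for a genuinely directed (not linearly ordered) index set there is no obvious way to thread a single choice through all levels, and $\mathcal N$ need not admit a cofinal chain (consider $L=\{\pm1\}^I$ with $I$ uncountable).

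More importantly, your reason for rejecting a transfinite approach is mistaken. You object that an infinite intersection of members of $\mathcal N$ need not lie in $\mathcal N$, so the intermediate quotients ``leave the class of Lie groups.'' But the paper does not restrict the intermediate stages to groups of the form $KN$ with $N\in\mathcal N$. It works instead with the poset of all pairs $(H,h)$ where $K\subseteq H\subseteq L$ is closed with $H/K$ compact and $h:Z\times[0,1]\to G/H$ is a partial lift. Any such $H$ is a closed subgroup of the locally compact pro-Lie group $L$ and is therefore itself a locally compact pro-Lie group by Definition~\ref{ProLieDef}. Hence if $H\neq K$ one can find a compact normal $N\unlhd H$ (normal in $H$, not in $L$) with $H/N$ a Lie group and $KN\subsetneq H$, and Corollary~\ref{LocallyTrivial} applied with $Q=H$ shows that $G/KN\to G/H$ is a fibration, so $h$ lifts one step further. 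Chains in this poset are handled precisely by Lemma~\ref{Limit}, where compatibility of the $h_i$ is \emph{given} rather than to be arranged. Zorn's Lemma then yields a maximal element, which must have $H=K$. The point you missed is that the refinement step takes place inside the current $H$, not inside $L$, so one never needs intersections of members of $\mathcal N$ to remain in $\mathcal N$.
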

\begin{proof}
    Let $Z$ be a topological space and let $f:Z\times[0,1]\to G/L$
    and $\tilde f_0:Z\times\{0\}\to G/K$ be continuous maps such that 
    $q\circ \tilde f_0(z,0)=f(z,0)$. We need to study the lifting problem
     \[
    \begin{tikzcd}
        Z\times\{0\} \arrow{r}{\tilde f_0}\arrow[hook]{d}{i} & G/K \arrow{d}{p} \\
        Z\times [0,1]\arrow{r}{f}\arrow[dashed]{ur}{\tilde f}& G/L,
    \end{tikzcd}
    \]
    where either $Z=[0,1]^m$ is a cube, or $G$ is paracompact and $Z$ is any Hausdorff space.
    For this, we consider the collection $\mathcal H$ consisting of all pairs $(H,h)$,
    where $H\subseteq G$ is a closed subgroup with $K\subseteq H\subseteq L$ 
    such that
    $H/K$ is compact, and $h:Z\times[0,1]\to G/H$ is a continuous map that
    solves an intermediate lifting problem, i.e. $h$ makes the following diagram commute
    \[
    \begin{tikzcd}
        Z\times\{0\} \arrow{r}{\tilde f_0}\arrow[hook]{dd}{i} & G/K\arrow{d} \\
        & G/H\arrow{d} \\
        Z\times [0,1]\arrow{r}{f}\arrow{ru}{h}& G/L ,
    \end{tikzcd}
    \]
    where the vertical arrows on the right are the natural maps.
    We turn $\mathcal H$ into a poset by declaring $(H,h)\leq(H',h')$
    if $H\supseteq H'$ and if the diagram
    \[
    \begin{tikzcd}
    & G/H' \arrow{d} \\
    Z\times[0,1] \arrow{r}{h} \arrow{ru}{h'} & G/H
    \end{tikzcd}
    \]
    commutes. Our aim is now 
    to show that there is an element $(K,\tilde f)\in \mathcal H$.

    \medskip\noindent\emph{Claim 1. The set $\mathcal H$ is nonempty.}\\
    Let $N\unlhd L$ be a compact subgroup such that $L/N$ is a Lie group,
    and put $H=KN$.  We note that $H/K\cong N/N\cap K$ is compact, and
    we apply Corollary~\ref{LocallyTrivial}
    with $(K,L,N)=(P,Q,N)$. 
    Thus $G/H\to G/L$ is a Serre fibration, and a Hurewicz fibration if $G$
    is paracompact.
    Hence $f$ has a lift $h$ such that $(H,h)\in\mathcal H$,
    provided that $Z=[0,1]^m$, or for a general $Z$ if $G$ is paracompact. 
    
    \medskip\noindent\emph{Claim 2. If $(H,h)\in\mathcal H$ with $H\neq K$, then there exists 
    $(H',h')\in\mathcal H$ with $(H,h)<(H',h')$.}\\
    Since $H\subseteq L$ is closed, $H$ is a locally compact pro-Lie group, cp.~Definition~\ref{ProLieDef}.
    Since $H/K$ is Hausdorff, there is an open identity neighborhood $W\subseteq H$ such that $WK\neq H$.
    Let $N\subseteq W$ be a compact normal subgroup of $H$ such that $H/N$ is a Lie group.
    We put $H'=KN$.
    Applying Corollary~\ref{LocallyTrivial} to $(K,H,N)=(P,Q,N)$, we see that 
    $G/H'\to G/H$ is a Serre fibration, and a Hurewicz fibration if $G$ is paracompact.
    Hence $h$ has a lift $h':Z\times[0,1]\to G/H'$ such that $(H',h')\in\mathcal H$,
    provided that $Z=[0,1]^m$, or for a general $Z$ if $G$ is paracompact. 
    The diagram
    \[
    \begin{tikzcd}
    & G/H' \arrow{d} \\
    Z\times[0,1] \arrow{r}{h} \arrow{ru}{h'} & G/H
    \end{tikzcd}
    \]
    commutes, whence $(H,h)<(H',h')$.

    \medskip\noindent\emph{Claim 3. The poset $(\mathcal H,\leq)$ is inductive, i.e. every linearly ordered subset has an upper bound.}\\
    Let $(I,\leq)$ be a linearly ordered set, with $(H_i,h_i)\in\mathcal H$,
    such that $(H_i,h_i)\leq (H_j,h_j)$ holds for $i\leq j$. Put $H=\bigcap_{i\in I} H_i$.
    From the fact that $H_i/K$ is compact and from the maps $H/K\to H_i/K\to H_i/H$
    we see that $H_i/H$ and $H/K$ are also compact.
    By Lemma~\ref{Limit}, $G/H$ is the projective limit of the projective system formed by the maps $G/H_j\rightarrow G/H_i$.
    From the universal property of the projective limit we obtain for each $i\in I$ a 
    commutative diagram
    \[
    \begin{tikzcd}
        Z\times\{0\} \arrow{rr}{\tilde f_0}\arrow[hook]{dd} && G/K \arrow{d}{p_i} \arrow{ld}[swap]{p}\\
        & G/H \arrow{r} & G/H_i \arrow{d} \\
        Z\times[0,1]\arrow{rr}[swap]{f}\arrow{rru}[swap]{h_i}\arrow{ru}{h} && G/L .
    \end{tikzcd}
    \]
    Here $p_i:G/K\to G/H_i$ is the natural map, $p=\varprojlim p_j$ and $h=\varprojlim h_j$.
    The natural map $\tilde p:G/K\to G/H$ also makes the upper-right triangle commute. Therefore $p=\tilde p$, whence $(H,h)\in\mathcal H$.

    \medskip
    Now we finish the proof. Since $\mathcal H$ is a nonempty inductive poset by Claim~1 and Claim~3, it has maximal
    elements by Zorn's Lemma. Let $(H,h)$ be a maximal element in $\mathcal H$. By Claim 2, $H=K$ and hence $\tilde f=h$ solves the
    lifting problem.
\end{proof}
As a consequence, we recover a result due to Skljarenko, cp.~Theorem 15 in \cite{Skljarenko}.
We first note the following fact, which is used implicitly in \emph{op.cit.}
Suppose that $p:E\to B$ is a bundle, and that the base space is a coproduct
$B=\coprod_{s\in S}B_s$ (a disjoint union of closed and open subspaces $B_s$). 
Assume that for each $s\in S$, the space $E_{B_s}$
is also a coproduct, $E_{B_s}=\coprod_{t\in T}E_{s,t}$.
If each map $E_{s,t}\to B_s$ is a Hurewicz fibration, then $E\to B$ is a Hurewicz fibration.
Indeed, if we are given a lifting problem
\[
\begin{tikzcd}
    Z\times\{0\} \arrow{r}{f_0} \arrow[hook]{d}& E\arrow{d} \\
    Z\times[0,1] \arrow{r}{f}\arrow[dashed]{ru}{\tilde f} & B,
\end{tikzcd}
\]
we may decompose $Z$ into the disjoint closed and open subsets $Z_{s,t}\times\{0\}=f_0^{-1}(E_{s,t})$.
Then $f(Z_{s,t}\times[0,1])\subseteq B_s$ (because $[0,1]$ is connected
and $B_s$ is closed and open),
and it suffices to solve the lifting problems
\[
\begin{tikzcd}
    Z_{s,t}\times\{0\} \arrow{r}{f_0} \arrow[hook]{d}& E_{s,t}\arrow{d} \\
    Z_{s,t}\times[0,1] \arrow{r}{f}\arrow[dashed]{ru}{\tilde f_{s,t}} & B_s 
\end{tikzcd}
\]
for all $s,t$ to obtain a global solution $\tilde f$. The following is Theorem~15 in \cite{Skljarenko}.
In the case that $G$ is compact, the result is also due to Madison and Mostert \cite{Madison}, cp.~Theorem~2.8 in~\cite{HofmannKramer}.
\begin{Thm}[Skljarenko]
    Let $G$ be a locally compact group and let $K,L\subseteq G$ be closed subgroups such that $K\subseteq L$.
    Then the natural map $G/K\to G/L$ is a Hurewicz fibration.
\end{Thm}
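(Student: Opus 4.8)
The plan is to deduce this theorem from the Main Theorem (Theorem~\ref{MainThm}) by reducing the general locally compact case to the locally compact pro-Lie case, using the structure theory recalled in Definition~\ref{ProLieDef} together with the coproduct-splitting observation stated just above. The Main Theorem already handles any $L$ that is a locally compact pro-Lie group, but an arbitrary locally compact group $L$ need not be pro-Lie. However, by the solution to Hilbert's 5th problem, $L$ contains an open almost connected subgroup $L_0$ which \emph{is} a locally compact pro-Lie group. Moreover, since $G$ is locally compact, I would also want $G$ itself to be paracompact so as to land in the Hurewicz (rather than merely Serre) conclusion; indeed every locally compact group is a disjoint union of $\sigma$-compact open subgroups and is therefore paracompact, so this causes no difficulty.

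The first step is to set up the intermediate subgroup. Let $L_0 \subseteq L$ be an open almost connected (hence locally compact pro-Lie) subgroup. Then $L/L_0$ is discrete, and I would consider the two-step factorization
\[
G/K \xrightarrow{\ q_1\ } G/L_0 \xrightarrow{\ q_2\ } G/L.
\]
For the first map $q_1 : G/K \to G/L_0$, I can apply Theorem~\ref{MainThm} directly, since $L_0$ is a locally compact pro-Lie group and $G$ is paracompact; thus $q_1$ is a Hurewicz fibration. (Here I should first replace $K$ by $K \cap L_0$ if necessary, or rather observe that the hypotheses $K \subseteq L_0$ need not hold; this is the delicate point addressed below.) The remaining task is the map $q_2 : G/L_0 \to G/L$ with $L_0$ open in $L$.

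The key reduction is that $q_2$ is, fiberwise, a covering-type map with discrete fibers $L/L_0$, and this is exactly where the coproduct-splitting fact recalled before the theorem statement does the work. Because $L_0$ is open in $L$, the map $q_2$ is a local homeomorphism: over a suitable open cover of $G/L$ the total space $G/L_0$ decomposes as a disjoint union (coproduct) of copies of the base, so $q_2$ restricts to homeomorphisms on closed-and-open pieces. Since the base $G/L$ is itself a coproduct of the orbits of the open subgroup action, I can arrange the decomposition $B = \coprod_s B_s$ and $E_{B_s} = \coprod_t E_{s,t}$ required by the cited fact, with each $E_{s,t} \to B_s$ a homeomorphism and hence trivially a Hurewicz fibration. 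Therefore $q_2$ is a Hurewicz fibration. Finally, the composite of two Hurewicz fibrations is a Hurewicz fibration, so $q = q_2 \circ q_1$ is a Hurewicz fibration, which is the assertion.

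I expect the main obstacle to be the bookkeeping around the subgroup $K$ in relation to $L_0$: the clean factorization above presumes $K \subseteq L_0$, which is not given. The honest fix is to choose the open almost connected subgroup of $G$ rather than of $L$, or to pass to $G_0$, an open almost connected (pro-Lie) subgroup of $G$, and analyze $q$ orbit-by-orbit over $G/L$ using the $G$-action together with the coproduct fact, reducing to cosets lying inside a single translate of $G_0$ where all subgroups in play are pro-Lie. Getting this localization precisely right — ensuring that on each open-and-closed piece of $G/L$ the relevant subgroups satisfy the pro-Lie hypothesis so that Theorem~\ref{MainThm} applies — is the step that requires genuine care, whereas the fibration bookkeeping and the composition argument are routine given the tools already assembled.
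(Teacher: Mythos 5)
Your main route has a genuine gap at exactly the step you treat as routine: the claim that $q_2:G/L_0\to G/L$ is a covering-type map, hence a Hurewicz fibration, ``because $L_0$ is open in $L$ and the fibers $L/L_0$ are discrete.'' No argument is given, and the obvious ones fail. It is true that $q_2$ is a local homeomorphism: choosing a symmetric identity neighborhood $W\subseteq G$ with $WW\cap L\subseteq L_0$, one checks that $q_2$ is injective on $gWL_0/L_0$. But a local homeomorphism need not be a covering map, let alone a fibration. To get an evenly covered neighborhood of $gL$ you need sheets through every fiber point $g\ell L_0$, and the natural candidate $gW\ell L_0/L_0$ maps injectively to $G/L$ only if $\ell^{-1}(WW\cap L)\ell\subseteq L_0$; since conjugation by $\ell$ does not preserve $L_0$ and $L/L_0$ may be infinite, no single choice of $W$ works for all $\ell$ simultaneously. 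This is precisely the gap in Wigner's argument that the paper isolates in its final Remark and records as an open Problem (is $G/K\to G/L$ a fibration when $K$ is open in $L$?). In your setting $G$ is locally compact, so the statement you need for $q_2$ is indeed true --- but only as a special case of Skljarenko's theorem itself, so you cannot invoke it without circularity. (Separately, your factorization requires $K\subseteq L_0$, which you acknowledge may fail; replacing $K$ by $K\cap L_0$ does not help, since the canonical map then goes $G/(K\cap L_0)\to G/K$, the wrong way.)

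The fix you sketch in your last paragraph --- take the open almost connected pro-Lie subgroup $G_0$ inside $G$ rather than inside $L$, and work orbit-by-orbit over $G/L$ --- is in fact the paper's actual proof, and it resolves both difficulties at once, so it should be carried out rather than deferred: the double cosets $G_0gL$ give a decomposition of $G/L$ into open sets $G_0gL/L$; the preimage of each such set in $G/K$ is the union of the open $G_0$-orbits $G_0g\ell K/K$ with $\ell\in L$, each of which maps \emph{onto} $G_0gL/L$; and each restriction $G_0g\ell K/K\to G_0gL/L$ is $G_0$-equivariantly homeomorphic to $G_0/\bigl(G_0\cap g\ell K(g\ell)^{-1}\bigr)\to G_0/\bigl(G_0\cap gLg^{-1}\bigr)$. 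Here $G_0\cap gLg^{-1}$ is a closed subgroup of the pro-Lie group $G_0$, hence itself a locally compact pro-Lie group, so Theorem~\ref{MainThm} applies inside the group $G_0$ (which is paracompact, being almost connected and locally compact, hence $\sigma$-compact), and the coproduct fact then assembles these restrictions into a Hurewicz fibration $G/K\to G/L$. Note that this one-step argument never produces the problematic discrete-fiber map $G/L_0\to G/L$ at all, which is what makes it work where your factorization does not.
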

\begin{proof}
    We essentially follow Skljarenko's proof~\cite{Skljarenko}.
    Let $H\subseteq G$ be an open subgroup which is a locally compact pro-Lie-group,
    cp.~Definition~\ref{ProLieDef}.
    We consider the double cosets $HgL\in H\backslash G/L$, which partition $G$ into disjoint
    open subsets. Since $G\to G/L$ is an open map, the image $HgL/L$ of $HgL$ in $G/L$ is
    open. For each $\ell\in L$ and $g\in G$ we have a commutative diagram
    \[
    \begin{tikzcd}
        H/H\cap g\ell K(g\ell)^{-1} \arrow{r}{(3)}\arrow{d}{(1)} & H/H\cap gLg^{-1}\arrow{d}{(2)} \\
         Hg\ell K/K \arrow{r}{(4)} & Hg L/L , \\
    \end{tikzcd}
    \]
    where the two vertical arrows are $H$-equivariant homeomorphisms.
    The horizontal arrow (3) is a Hurewicz fibration by Theorem~\ref{MainThm},
    because $H$ is a locally compact pro-Lie group.
    Hence (4) is also a Hurewicz fibration.
    Since $G/K=\bigcup \{Hg\ell K/K\mid g\in G,\ell\in L\}$
    and $G/L=\bigcup\{ HgL/L\mid g\in G\}$, the claim follows by the remarks preceeding this theorem.
\end{proof}

\begin{Rem}
As we mentioned in the introduction, Wigner claims in Prop.~2 in~\cite{Wigner} that $G\to G/L$ is a fibration if
$L$ is a locally compact subgroup of the topological group $G$. However, there are several issues with the proof.

The author argues first that the result is true if $L$ is compact (and tacitly uses a deep result due to Gleason
on the existence of local sections for compact Lie transformation groups). So far the proof follows 
ideas similar to Madison and Mostert \cite{Madison} and appears to be correct, albeit very brief.

For general locally compact groups, Wigner argues that $L$ has an open subgroup $L'$ which has a compact normal subgroup 
$L''\unlhd L'$ such that $L'/L''$ is a Lie group.
Then he uses a result due to Serre in order to show that $G/L''\to G/L'$ is a locally trivial bundle.
However, Serre's proof is only valid for compact Lie group actions. This part could be fixed by 
invoking Palais' results \cite{Palais} instead. Once this is done, it follows that $G\to G/L'$ is a fibration, since it is
the composite of two fibrations $G\to G/L''\to G/L'$.

The last claim is that the map $G/L'\to G/L$ is a covering map since the fiber $L/L'$ is discrete.
However, no argument is given for this claim, so we put it as an open question.
\end{Rem}
\begin{Prob}
    Let $G$ be a topological group and let $K,L\subseteq G$ be locally compact subgroups, with $K
    \subseteq L$ open in $L$. Is $G/K\to G/L$ a fibration?
\end{Prob}
If the answer to the problem is affirmative, we may drop the assumption that $L$ is a pro-Lie group in
Theorem~\ref{MainThm}.



\end{document}